\newtheorem{thm}{Theorem}[section]
\newtheorem{lemma}[thm]{Lemma}
\newtheorem{quest}[thm]{Question}
\newtheorem{defi}[thm]{Definition}
\theoremstyle{remark}
\newtheorem{remark}[thm]{Remark}
\newcommand{\ben}{\begin{enumerate}}
\newcommand{\een}{\end{enumerate}}
\newcommand{\bit}{\begin{itemize}}
\newcommand{\eit}{\end{itemize}}
\def\QED{\nobreak\quad\ifmmode\roman{Q.E.D.}\else{\rm Q.E.D.}\fi}
\def\sna{\scriptscriptstyle\mathcal{NA}}
\begin{document}
\title[]{On Graev type ultra-metrics}

\author{Menachem Shlossberg}
\address{Department of Mathematics,
Bar-Ilan University, 52900 Ramat-Gan, Israel}
\email{shlosbm@macs.biu.ac.il}

\subjclass[2010]{Primary 54H11}

\keywords{Graev ultra-metric, non-archimedean}

\begin{abstract} We study Graev ultra-metrics which were introduced by Gao
\cite{GAO}. We show that the free non-archimedean balanced
topological group defined over an ultra-metric space is metrizable
by a Graev ultra-metric. We prove that the Graev ultra-metric has a
maximal property.  Using this property, among others,  we show that
the Graev ultra-metric associated with an ultra-metric space $(X,d)$
with diameter$\leq 1$
 coincides with the ultra-metric $\hat{d}$
of Savchenko and  Zarichnyi  \cite{SZ}.

\end{abstract}

\maketitle

\section{\bf Introduction and Preliminaries}
  A uniform space is {\it non-archimedean} if it has a
base of equivalence relations. A metric $d$ is called {\it
ultra-metric} if it satisfies the strong triangle inequality.
Clearly, the metric uniformity of every ultra-metric space $(X,d)$
is non-archimedean.
 By  Graev's  Extension Theorem (see \cite{GRA}),
  for every metric $d$ on  $X\cup \{e\}$ there exists a metric $\delta$ on the free group $F(X)$ with the following
properties: \begin{enumerate}
\item $\delta$ extends $d.$
\item $\delta$ is a two sided invariant metric on $F(X).$
\item $\delta$ is maximal among all invariant metrics on $F(X)$
extending $d.$ \end{enumerate}

 Gao \cite{GAO} has  recently presented the notion of {\it Graev
ultra-metric}, a natural ultra-metric modification  to Graev's
classical construction.
 We study  this relatively new concept, after reviewing
it in this section.  In Section \ref{sub:max} we show that Graev
ultra-metrics satisfy a maximal property (Theorem
\ref{theorem:grau}).  Recall that according to \cite{Mar} any
continuous map from a Tychonoff space $X$ to a topological group $G$
can be uniquely extended to a continuous homomorphism from the
(Markov) free topological group $F(X)$ into $G.$ Free topological
groups were studied by  researchers in different contexts. See for
example, \cite{AT, Tk, pes85, Us-free, Sip, Me-F,Num2,Pest-Cat,Mor}.
 In Section \ref{sub:bal} we show that the
uniform free non-archimedean balanced topological group  defined
over an ultra-metric space is metrizable by a Graev ultra-metric
(Theorem
 \ref{thm:metgr}). In Section \ref{sub:zar} we compare between seemingly different
ultra-metrics that are defined on the free group $F(X)$ (Theorem
\ref{thm:coinc}).
  We start with relevant notations and definitions from  \cite{GAO}. Considering a nonempty set $X$ we  define  $\overline{X}=X\cup X^{-1}\cup \{e\}$
where $X^{-1}=\{x^{-1}:\ x\in X\}$ is a disjoint copy of $X$ and
$e\notin X\cup X^{-1}.$ We agree that $(x^{-1})^{-1}=x$ for every
$x\in X$ and also that $e^{-1}=e.$ Let $W(X)$ be the set of words
over the alphabet $\overline{X}$.

We call a word $w\in W(X)$ {\it irreducible} if  either one of the
following conditions holds:\begin{itemize} \item $w=e$
\item $w=x_0\cdots x_n$ does not contain the letter $e$ or a sequence
of two adjacent letters of the form $xx^{-1}$ where $x\in X\cup
X^{-1}.$ \end{itemize} The length of a word $w$ is denoted by
$lh(w).$ $w'$ is the {\it reduced word} for $w\in W(X).$ It is the
irreducible word obtained from $w$ by applying repeatedly the
following algorithm: replace any appearance of $xx^{-1}$ by $e$ and
eliminate $e$ from any occurrence of the form $w_1ew_2,$ where at
least one of $w_1$ and $w_2$ is nonempty. A word $w\in W(X)$ is {\it
trivial} if $w'=e.$ Now, as a set  the free group $F(X)$ is simply
the collection of all irreducible words. The group operation is
concatenation of words followed by word reduction. Note that the
identity element of $F(X)$ is $e$ and not the empty word.

\begin{defi}\label{def:grau}\cite[Definition 2.1]{GAO} Let $d$ be an ultra-metric on $\overline{X}$ for which
the following conditions hold for every $x,y\in X$:
\begin{enumerate}\item $d(x^{-1},y^{-1})=d(x,y).$
\item $d(x,e)=d(x^{-1},e).$
\item $d(x^{-1},y)=d(x,y^{-1}).$
\end{enumerate}   For $w=x_0\cdots x_n, \ v=y_0\cdots y_n\in W(X)$ put
$$\rho_u(w,v)=\max\{d(x_i,y_i): \ 0\leq i\leq n\}.$$

The \it{Graev ultra-metric} $\delta_u$ on $F(X)$ is defined as
follows:
$$\delta_u(w,v)=\inf\{\rho_u(w^\ast,v^{\ast}):\ w^{\ast},v^{\ast}\in
W(X), lh(w^{\ast})=lh(v^{\ast}), (w^{\ast})'=w, (v^{\ast})'=v\}, \
 $$ for every $w,v\in F(X).$
\end{defi}
The following concepts have a lot of significance in studying Graev
ultra-metrics.
\begin{defi}\cite{DG,GAO}\label{def:dag}
Let $m,n\in \mathbb N$  and $m\leq n.$ A bijection $\theta$ on
$\{m,\ldots, n\}$ is a {\it match} if \begin{enumerate}
\item $\theta\circ \theta=id$ and
\item there are no $ m\leq i,j\leq n$ such that $i<j<\theta(i)<\theta(j).$
\end{enumerate} For any match $\theta$ on $\{0,\ldots, n\}$ and $w=x_0\cdots
x_n\in W(X)$  define

$$x_{i}^\theta=\left\{
               \begin{array}{ll}
                 x_i, & \text{if}\ \theta(i)>i  \\
                 e, & \text{if}\ \theta(i)=i \\
                 x_{\theta(i)}^{-1}, & \text{if}\ \theta(i)<i
               \end{array}
             \right.$$

and $w^\theta=x_{0}^{\theta} \cdots x_{n}^{\theta}.$
\end{defi}

\begin{thm}\label{thm:gao} \begin{enumerate} \item \cite[Theorem
2.3]{GAO}  For any $$w\in F(X), \ \delta_u(w,e)=\min
\{\rho_u(w,w^{\theta}):\ \theta  \ \text{is a match}\}.$$
\item \cite[Theorem
2.4]{GAO} Let $(X, d)$ be an ultra-metric space. Then the Graev
ultra-metric $\delta_u$ is a two-sided invariant ultra-metric on
$F(X)$ extending $d$. Furthermore, $F(X)$ is a topological group in
the topology induced by $\delta_u$. If $X$ is separable, so is
$F(X)$. \end{enumerate}
\end{thm}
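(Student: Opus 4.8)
The plan is to establish the combinatorial formula (1) first and then bootstrap all of (2) from it together with soft facts about invariant metrics.

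For (1), the inequality $\delta_u(w,e)\le\min_\theta\rho_u(w,w^\theta)$ is the easy direction. I would first record, by induction on $lh(w)$ and the non-crossing condition in Definition~\ref{def:dag}, that $w^\theta$ is always a trivial word: a non-identity match has an innermost matched pair $\theta(i)=i+k$ enclosing only $\theta$-fixed positions, the corresponding block $x_i\,e\cdots e\,x_i^{-1}$ of $w^\theta$ reduces away, and one recurses on the complementary match. Since $lh(w^\theta)=lh(w)$, $(w^\theta)'=e$, and $w$ is already irreducible, the pair $(w,w^\theta)$ is admissible in Definition~\ref{def:grau}, and taking the infimum over matches gives the bound. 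The reverse inequality is the real content: given $w^*,v^*\in W(X)$ with $lh(w^*)=lh(v^*)$, $(w^*)'=w$ and $(v^*)'=e$, I must manufacture a match $\theta$ on the positions of $w$ with $\rho_u(w,w^\theta)\le\rho_u(w^*,v^*)$, and I would argue by induction on $lh(w^*)-lh(w)$. When this number is $0$ we have $w^*=w$, and a trivial word $v^*$ comes with a canonical non-crossing matching $\sigma$ of its positions into inverse pairs and $e$-valued fixed points (again an easy induction on length); taking $\theta=\sigma$, a three-case check according to whether $\theta(i)>i$, $=i$, or $<i$, using the ultra-metric isosceles inequality and the symmetry axioms (1)--(3) for $d$, shows $d(x_i,x_i^\theta)\le\rho_u(w,v^*)$ coordinatewise. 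When $lh(w^*)>lh(w)$, the word $w^*$ contains either a letter $e$ or an adjacent pair $bb^{-1}$; I delete those one or two positions from $w^*$ (which does not change $(w^*)'$) and perform matching surgery on $v^*$ — at the deleted positions and at their $\sigma$-partners, replacing a partner letter by $e$ or re-pairing two freed partners with suitably adjusted letters — so as to obtain a shorter trivial word of the right length without increasing $\rho_u$, and then invoke the inductive hypothesis.

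I expect this surgery step to be the main obstacle: in each configuration of $\sigma$ on the deleted positions one must verify (a) that the modified matching on $v^*$ is still non-crossing, which works because the deleted positions are adjacent (or a lone $e$), so the $\sigma$-arcs removed are innermost enough that re-pairing their endpoints cannot create a crossing; and (b) that the letter of $v^*$ at each surviving modified position stays within $\rho_u(w^*,v^*)$ of the corresponding letter of $w^*$, which is where the strong triangle inequality for $d$ and the hypotheses $d(b,e)=d(b^{-1},e)$, $d(p^{-1},q^{-1})=d(p,q)$, $d(p^{-1},q)=d(p,q^{-1})$ get used repeatedly. This is a finite but fiddly case analysis; the rest of (1) is bookkeeping.

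Granting (1), part (2) should fall out quickly. For two-sided invariance: given any admissible pair $w^*,v^*$ for $\delta_u(w,v)$ and a fixed $u\in F(X)$, the plain concatenations $uw^*$ and $uv^*$ are admissible for $\delta_u(uw,uv)$ with $\rho_u(uw^*,uv^*)=\rho_u(w^*,v^*)$, so $\delta_u(uw,uv)\le\delta_u(w,v)$; applying this with $u^{-1}$ gives equality, and the right-handed version is symmetric. The strong triangle inequality then reduces, via invariance, to $\delta_u(ab,e)\le\max\{\delta_u(a,e),\delta_u(b,e)\}$, which follows by concatenating near-optimal admissible pairs for $a$ and $b$ and using that a concatenation of trivial words is trivial. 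For non-degeneracy: if $\delta_u(w,e)=0$, then by (1) the minimum is attained, so $w=w^\theta$ as words for some match $\theta$; but $w^\theta$ is trivial, hence $w'=e$, and $w$ being irreducible forces $w=e$. That $\delta_u$ extends $d$ is immediate for ``$\le$'' (use $w^*=x$, $v^*=y$); for ``$\ge$'' with $x\ne y$, apply (1) to $xy^{-1}$: there are exactly two matches on $\{0,1\}$, giving $\delta_u(xy^{-1},e)=\min\{\max(d(x,e),d(y,e)),\,d(x,y)\}=d(x,y)$ by the strong triangle inequality for $d$, while $\delta_u(x,y)=\delta_u(xy^{-1},e)$ by invariance. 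Finally, a two-sided invariant ultra-metric automatically makes the group a topological group (inversion is an isometry, and $\delta_u(gh,g'h')\le\max\{\delta_u(g,g'),\delta_u(h,h')\}$ by the strong triangle inequality plus invariance), and if $X_0\subseteq X$ is countable dense, then $F(X_0)$ is countable, and approximating each letter of a reduced word within $\varepsilon$ in $\overline{X_0}$ and comparing via $\rho_u$ with the first word left unpadded shows $F(X_0)$ is dense in $(F(X),\delta_u)$; hence $F(X)$ is separable.
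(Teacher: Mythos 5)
This statement is imported by the paper: it is quoted from \cite{GAO} (Theorems 2.3 and 2.4, building on \cite{DG}) and no proof is given in the text, so there is no in-paper argument to compare against line by line. Judged on its own, your reconstruction is essentially correct and runs on the same circle of ideas as Gao's: the easy inequality from triviality of $w^\theta$ (innermost-pair induction), and the hard inequality from the matching lemma for trivial words (the paper's Lemma \ref{theorem:mat}) combined with the three-case isosceles estimate using the symmetry hypotheses on $d$ (which, note, you must first extend from $x,y\in X$ to all letters of $\overline{X}$ -- an easy but necessary check). Where you genuinely differ is the treatment of the excess length: Gao/Ding--Gao compare $w^*$ directly with $(w^*)^{\theta}$ for the match $\theta$ of the trivial word $v^*$ and then use a separate reduction lemma saying that passing from $w^*$ to its reduced word cannot increase $\min_\theta\rho_u$, whereas you induct on $lh(w^*)-lh(w)$ and do surgery on the trivial side. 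Your route does close: adjacency of the deleted positions keeps the repaired pairing non-crossing, triviality of the modified word follows from the converse innermost-pair argument (which you should state explicitly, since you use it both for $w^\theta$ and for the surgered $v^*$), and in the hardest configuration (both $\sigma$-partners of the deleted pair $bb^{-1}$ survive) the right adjustment is to replace $y_{m_2}$ by $y_{m_1}^{-1}=y_k$ and estimate $d(x_{m_2},y_k)\le\max\{d(x_{m_2},y_{k+1}^{-1}),\,d(y_{k+1}^{-1},b),\,d(b,y_k)\}\le\rho_u(w^*,v^*)$. The derivation of part (2) from part (1) -- invariance by padding with $u$, the ultrametric inequality by concatenating near-optimal pairs, non-degeneracy and the extension property from the explicit two-letter computation together with the isosceles property of $d$, continuity of multiplication and inversion from two-sided invariance, and separability via density of $F(X_0)$ -- is standard and correct as you state it. The only caveat is that the surgery case analysis, which you rightly flag as the main obstacle, is sketched rather than carried out; it does work, but in a complete write-up those finitely many configurations must be verified as above.
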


\section{A maximal property of Graev ultra-metrics}\label{sub:max}
Recall that given a metric $d$ on $X\cup \{e\},$ its associated
Graev metric is the maximal among all invariant metrics on $F(X)$
extending $d.$ This fact leads for a natural question:
\begin{quest}
Is Graev ultra-metric maximal in any sense?
\end{quest}

The following theorem provides a positive answer.
\begin{thm}\label{theorem:grau}
 Let $d$ be an ultra-metric on $\overline{X}$ for which
the following conditions hold for every $x,y\in X$:
\begin{enumerate}\item $d(x^{-1},y^{-1})=d(x,y).$
\item $d(x,e)=d(x^{-1},e).$
\item $d(x^{-1},y)=d(x,y^{-1}).$
\end{enumerate}     Then: \begin{enumerate} [(a)]
\item The Graev ultra-metric $\delta_u$ is maximal among all
invariant ultra-metrics on $F(X)$ that extend the metric $d$ defined
on $\overline{X}.$
\item If in addition $d(x^{-1},y)=d(x,y^{-1})=\max\{d(x,e),d(y,e)\}$
then $\delta_u$  is  maximal among all invariant ultra-metrics on
$F(X)$ that extend the metric $d$ defined on $X\cup
\{e\}.$\end{enumerate}
\end{thm}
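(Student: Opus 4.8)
The plan is to prove part (a) first, then deduce part (b) by checking that the extra hypothesis forces any invariant ultra-metric extending $d|_{X\cup\{e\}}$ to actually extend $d$ on all of $\overline{X}$, at which point part (a) applies.

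For part (a), let $\rho$ be any invariant ultra-metric on $F(X)$ with $\rho|_{\overline{X}\times\overline{X}} = d$; I must show $\rho(w,v)\leq \delta_u(w,v)$ for all $w,v\in F(X)$. By invariance it suffices to treat $v=e$, i.e. to show $\rho(w,e)\leq\delta_u(w,e)$. By Theorem \ref{thm:gao}(1), $\delta_u(w,e)=\rho_u(w,w^\theta)$ for some match $\theta$ on $\{0,\dots,n\}$, where $w=x_0\cdots x_n$. The key observation is that $w^\theta$, when read as a word, reduces to $e$: a match pairs up positions in a non-crossing way, so the word $x_0^\theta\cdots x_n^\theta$ is a ``balanced'' word that cancels down to the identity (this is essentially the content of Definition \ref{def:dag} and is used in the proof of Theorem \ref{thm:gao}(1)). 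Now I estimate using the strong triangle inequality along the ``letter-by-letter'' path from $w$ to $w^\theta$: writing $w = x_0\cdots x_n$ and $w^\theta = x_0^\theta\cdots x_n^\theta$, interpolate through the words $u_k = x_0^\theta\cdots x_{k-1}^\theta x_k\cdots x_n$, so that $u_k$ and $u_{k+1}$ differ only in the $k$-th letter. Then
\[
\rho(w,w^\theta)\ \leq\ \max_{0\leq k\leq n}\rho(u_k,u_{k+1}),
\]
and by invariance of $\rho$ (multiply on left and right by the common prefixes/suffixes) each term $\rho(u_k,u_{k+1})$ equals $\rho(x_k, x_k^\theta)$, which is $d(x_k,x_k^\theta)$ since both letters lie in $\overline{X}$ and $\rho$ extends $d$ there. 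Hence $\rho(w,w^\theta)\leq\max_k d(x_k,x_k^\theta)$. Unwinding the definition of $x_k^\theta$ and using conditions (1)–(3) on $d$, one checks $d(x_k,x_k^\theta)\leq d(x_k,y_k)$ where $w^\theta = y_0\cdots y_n$ is the matching word, so $\max_k d(x_k,x_k^\theta) = \rho_u(w,w^\theta) = \delta_u(w,e)$. Finally, since $w^\theta$ reduces to $e$ in $F(X)$, we have $\rho(w^\theta,e)=0$, so $\rho(w,e)\leq\max\{\rho(w,w^\theta),\rho(w^\theta,e)\} = \rho(w,w^\theta)\leq\delta_u(w,e)$, as required. (One must be mildly careful that $w^\theta$ as an element of $W(X)$ may not be reduced; but $\rho$ is defined on $F(X)$, so one works with its reduced form, and the telescoping estimate above should be phrased so that reduction only helps.)

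For part (b), suppose additionally $d(x^{-1},y)=d(x,y^{-1})=\max\{d(x,e),d(y,e)\}$, and let $\rho$ be an invariant ultra-metric on $F(X)$ extending $d$ on $X\cup\{e\}$ only. I claim $\rho$ automatically extends $d$ on all of $\overline{X}$. Indeed $\rho(x^{-1},y^{-1}) = \rho(e, x y^{-1})$ (multiply by $x$ on the left and $y^{-1}$ on the right, using invariance) $=\rho(y,x)=d(x,y)$, giving condition (1); similarly $\rho(x,e)=\rho(e,x^{-1})=\rho(x^{-1},e)$ gives condition (2). For condition (3): $\rho(x^{-1},y)=\rho(e,xy)\leq\max\{\rho(e,x),\rho(x,xy)\}=\max\{d(x,e),\rho(e,y)\}=\max\{d(x,e),d(y,e)\}$, and symmetrically the reverse inequality via $\rho(e,x)\leq\max\{\rho(e,xy),\rho(xy,x)\}$ gives $d(x,e)\leq\max\{\rho(x^{-1},y),d(y,e)\}$ and likewise for $d(y,e)$, so $\rho(x^{-1},y)=\max\{d(x,e),d(y,e)\}=d(x^{-1},y)$. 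Thus $\rho$ extends $d$ on $\overline{X}$, and part (a) yields $\rho\leq\delta_u$.

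The main obstacle I anticipate is the bookkeeping in part (a): verifying rigorously that the matched word $w^\theta$ reduces to $e$ and that the telescoping/interpolation argument respects word reduction in $F(X)$ (as opposed to in $W(X)$), together with the case analysis showing $d(x_k,x_k^\theta)\leq d(x_k,y_k)$ — this last step is exactly where hypotheses (1)–(3), and in part (b) the stronger hypothesis, get used, since $x_k^\theta$ is sometimes $x_k$, sometimes $e$, and sometimes $x_{\theta(k)}^{-1}$. Everything else is a routine consequence of invariance and the strong triangle inequality.
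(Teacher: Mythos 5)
Your argument for part (a) is correct and takes a genuinely different route from the paper. The paper proves, by induction on $lh(w)$ with a case analysis on whether $\theta(0)=n$, that $\rho_u(w,w^\theta)\geq R(w,e)$ for every match $\theta$ and every invariant ultra-metric $R$ extending $d$ on $\overline{X}$; you instead take the minimizing match from Theorem \ref{thm:gao}(1), use the fact that the word $w^\theta$ reduces to $e$ (true, and provable by a short induction using the non-crossing condition in Definition \ref{def:dag}), and telescope through $u_k=x_0^\theta\cdots x_{k-1}^\theta x_k\cdots x_n$, so that two-sided invariance plus the strong triangle inequality give $\rho(w,e)=\rho(w,w^\theta)\leq\max_k \rho(x_k,x_k^\theta)=\max_k d(x_k,x_k^\theta)=\rho_u(w,w^\theta)=\delta_u(w,e)$. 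This is shorter than the paper's induction; the only debt is the unproved (but standard) triviality of $w^\theta$. Note also that in (a) your step ``$d(x_k,x_k^\theta)\leq d(x_k,y_k)$'' is vacuous, since $y_k=x_k^\theta$ by definition of $w^\theta$.

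Part (b), however, has a genuine gap: your claim that any invariant ultra-metric $\rho$ extending $d$ on $X\cup\{e\}$ automatically extends $d$ on all of $\overline{X}$ is false, and your derivation of the reverse inequality $\rho(x^{-1},y)\geq\max\{d(x,e),d(y,e)\}$ breaks down precisely when $d(x,e)=d(y,e)$: in that case the two inequalities you invoke hold no matter how small $\rho(x^{-1},y)$ is, so nothing is forced. Concretely, take $X=\{x\}$, so $F(X)\cong\mathbb{Z}$, with $d(x,e)=d(x^{-1},e)=d(x,x^{-1})=c$, and set $\rho(m,n)=N(m-n)$ where $N(k)=c$ for odd $k$ and $N(k)=c'<c$ for even $k\neq 0$; this is an invariant ultra-metric extending $d$ on $X\cup\{e\}$, yet $\rho(x,x^{-1})=N(2)=c'<c=d(x,x^{-1})$. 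Hence the reduction of (b) to (a) as you state it cannot work. The repair is easy and is exactly the observation the paper makes: one never needs $\rho=d$ on $\overline{X}$, only the inequality $\rho(a,b)\leq d(a,b)$ for $a,b\in\overline{X}$, which does follow from the extra hypothesis via the strong triangle inequality, e.g. $\rho(x,y^{-1})\leq\max\{\rho(x,e),\rho(e,y^{-1})\}=\max\{d(x,e),d(y,e)\}=d(x,y^{-1})$; with this one-sided bound your telescoping argument from (a) goes through verbatim and yields (b).
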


\begin{proof}
 We prove (a) using the following claim.  \vskip 0.2cm \textbf{Claim 1:}  Let $R$ be an invariant ultra-metric on
$F(X)$ that extends the metric $d$ defined on $\overline{X}$ and
$w=x_0\cdots x_n\in F(X).$ Then for every match $\theta$ on
$\{0,\ldots, lh(w)-1\}$ we have
$$\rho_u(w,w^{\theta})\geq R(w,e).$$
\begin{proof} We prove the claim by induction on $lh(w).$ If $lh(w)=1$ then
the only match is the identity. In this case by definition
$w^\theta=e$ and also $w\in \overline{X}$ so
$$\rho_u(w,w^\theta)=\rho_u(w,e)=d(w,e)=R(w,e).$$

If $lh(w)=2$ then $w=x_{0}x_{1}$ where $x_0,x_1\in \overline{X}$ and
there are only two matches to consider: the identity map and a
transposition.

If $\theta=Id$ then
$$\rho_u(w,w^\theta)=\max\{{d}(x_0,e),(x_1,e)\}=$$
$$=\max\{R(x_0,e),R(x_1,e)\}=\max\{R(x_{0}x_{1},x_1),R(x_1,e)\}\geq R(w,e).$$

If $\theta$ is a transposition we have
$$\rho_u(w,w^\theta)={d}(x_1,x_{0}^{-1})=R(x_1,x_{0}^{-1})=R(w,e).$$

We can now  assume that $lh(w)\geq 3$
 and also  that the assertion is true for every word $t$ with
$lh(t)<lh(w).$ Let $\theta$ be a match on $\{0,\ldots, lh(w)-1\}$
(where $w=x_0\cdots x_n$ and $lh(w)=n+1$).

First case: $\theta(0)\neq n.$ In this case there exists $j\geq 1$
such that $\theta(j)=n.$  For every $j\leq i\leq n$ we have
$j\leq\theta(i)\leq n.$ Indeed, otherwise $j>\theta(i).$ Now,
$\theta(j)=n,\ \theta(n)=j$ so we conclude that $i\neq j$ and $i\neq
n.$ Therefore, $\theta(i)<j<i<n$ and we obtain that
$$\theta(i)<j<i<\theta(j),$$  contradicting the definition of a match.
This implies that   $\theta$  induces two matches: $\theta_1$ on
$\{0,\ldots, j-1\}$  and  $\theta_2$ on $\{j,\ldots n\}.$  Let
$g_1=x_0\cdots x_{j-1}, g_2=x_j\cdots x_n.$

Clearly $w=g_1g_2$ and using the induction hypothesis we obtain that
$$\rho_u(w,w^\theta)=\max\{\rho_u(g_1,g_{1}^{\theta_1}),\rho_u(g_2,g_{2}^{\theta_2})\}\geq \max\{R(g_1,e),R(g_2,e)\}=$$
$$ =\max\{R(g_{1}g_2,g_2),R(g_2,e)\}\geq
R(g_{1}g_{2},e)=R(w,e).$$

Second case: $\theta(0)=n$  where $n\geq 2.$ Then,
$$R(x_0\cdots x_n,e)=R(x_1\cdots
x_{n-1},x_{0}^{-1}x_{n}^{-1})\leq$$$$ \leq\max\{R(x_1\cdots
x_{n-1},e),R(x_{0}^{-1}x_{n}^{-1},e)\}=$$$$=
\max\{R(x_{0}x_n,e),R(x_1\cdots x_{n-1},e)\}.$$ Letting
$g_1=x_{0}x_n,\ g_2=x_1\cdots x_{n-1},$ we have $$R(w,e)\leq
\max\{R(g_1,e),R(g_2,e)\}.$$

Now, $\theta$ induces two matches on $\{0,n\}$ and on $\{1,\ldots,
n-1\}$ which we denote by $\theta_1,\theta_2$ respectively. From the
inductive step and also from the fact that the assertion is true for
words of length $2$ we have:
 $R(g_1,e)=R(x_{1}x_n,e)\leq \rho_u(g_1,g_{1}^{\theta_{1}})$
and also $R(g_2,e)\leq \rho_u(g_2,g_{2}^{\theta_{2}}).$
 On the one hand,
$$\rho_u(w,w^{\theta})=\max\{\rho_u(x_0,x_{0}^{\theta}),\rho_u(x_n,x_{n}^{\theta}),\rho_u(g_2,g_{2}^{\theta_{2}})\}=$$$$
=\max\{\rho_u(x_0,x_0),\rho_u(x_n,x_{0}^{-1}),\rho_u(g_2,g_{2}^{\theta_{2}})\}.$$

On the other hand,
$\rho_u(g_1,g_{1}^{\theta_1})=\max\{\rho_u(x_0,x_0),\rho_u(x_n,x_{0}^{-1})\}.$

Hence,
$$\rho_u(w,w^\theta)=\max\{\rho_u(g_i,g_{i}^{\theta_i}): 1\leq i\leq
2\}\geq$$$$\geq \max\{R(g_1,e),R(g_2,e)\}\geq R(w,e).$$
\end{proof}
To prove $(a)$ let $R$ be an invariant ultra-metric on $F(X)$ which
extends the metric $d$ defined on $\overline{X}.$ By the invariance
of both $\delta_u$ and $R$ it suffices to show that
$\delta_u(w,e)\geq R(w,e) \ \forall w\in F(X).$ The proof now
follows  from Theorem \ref{thm:gao}.1
 and Claim $1.$ The proof of $(b)$
is quite similar. It follows from the obvious analogue of Claim $1.$
We mention few necessary changes and observations in the proof. Note
that this time $R$ is an invariant ultra-metric on $F(X)$ which
extends the metric $d$ defined on $X\cup \{e\}.$ We have
$d(x,e)=R(x,e)\ \forall x\in \overline{X}.$ This is due to the
invariance of $R$ and  the equality $d(x,e)=d(x^{-1},e)$. This
allows us to use the  same arguments, as in the proof of Claim $1,$
to prove the cases $lh(w)=1$ and $lh(w)=2$ where $\theta=id.$ For
the case $lh(w)=2$ where $\theta$ is a transposition note that we do
not necessarily  have $d(x_1,x^{-1}_{0})=R(x_1,x^{-1}_{0}).$
However, by the additional assumption we do have
$$d(x_1,x^{-1}_{0})\geq R(x_1,x^{-1}_{0}).$$ Indeed,
$$d(x_1,x^{-1}_{0})=\max\{d(x_1,e),d(x_0,e)\}$$$$=\max\{R(x_1,e),R(x_0,e)\}=\max\{R(x_1,e),R(x^{-1}_{0},e)\}$$$$\geq
R(x_1,x^{-1}_{0}).$$ So, the assertion is true for $lh(w)=2.$
  The inductive step is left unchanged.
\end{proof}
\section{Uniform free non-archimedean balanced groups} \label{sub:bal}
\begin{defi} A topological group is: \begin{enumerate} \item
{\it non-archimedean} if it has a base at the identity consisting of
open subgroups. \item  {\it balanced} if its left and right
uniformities coincide.\end{enumerate}\end{defi}
 In \cite{MS} we proved that the free non-archimedean
balanced group of an ultra-metrizable uniform space is metrizable.
Moreover, we claimed that this group is metrizable by a Graev type
ultra-metric. In this section  we prove the last assertion in full
details (see  Theorem \ref{thm:metgr}). For the reader's convenience
we review the definition of this topological group and some of its
properties (see \cite{MS} for more details). For a topological group
$G$ denote by $N_e(G)$  the set of all neighborhoods at the identity
element $e$.
\begin{defi} \label{d:FreeGr} Let
 $(X,{\mathcal U})$ be a non-archimedean uniform space. The \emph{uniform free non-archimedean balanced
 topological group of $(X,{\mathcal U})$} is denoted by $F^b_{\sna}$ and  defined as follows:
 $F^b_{\sna}$ is a  non-archimedean balanced topological group for
 which
 there exists a universal uniform map
 $i: X \to F^b_{\sna}$
satisfying the following universal property. For every uniformly
continuous map \textbf{$\varphi: (X,{\mathcal U}) \to G$} into a
balanced non-archimedean topological group $G$ there exists a unique
continuous homomorphism \textbf{$\Phi: F^b_{\sna} \to G $} for which
the following diagram commutes:
\begin{equation*} \label{equ:ufn}
\xymatrix { (X,{\mathcal U}) \ar[dr]_{\varphi} \ar[r]^{i} &
F^b_{\sna}
\ar[d]^{\Phi} \\
  & G }
\end{equation*}
\end{defi}

Let $(X,{\mathcal U})$ be a non-archimedean uniform space,
$Eq(\mathcal U)$ be the set of equivalence relations from ${\mathcal
U}$.
Define two functions from $X^{2}$ to  $F(X):$ $j_2$ is the mapping
$(x,y)\mapsto x^{-1}y$  and  $j_{2}^{\ast}$ is the mapping
$(x,y)\mapsto xy^{-1}.$
\begin{defi}\cite[Definition 4.9]{MS}
 \label{def:desc} \begin{enumerate} \item Following \cite{pes85}, for  every $\psi\in {\mathcal U}^{F(X)}$
let $$V_{\psi}:=\bigcup_{w\in F(X)}w(j_{2}(\psi(w))\cup
j_{2}^{\ast}(\psi(w)))w^{-1}.$$

\item  As a particular case in which every $\psi$ is a constant
function we obtain the  set
$$\tilde{\varepsilon}:=\bigcup_{w\in F(X)}w(j_{2}(\varepsilon)\cup
j_{2}^{\ast}(\varepsilon))w^{-1}.$$ \end{enumerate}
\end{defi}
\begin{remark}\cite[Remark 4.10]{MS}\label{rem:sym}
Note that if $\varepsilon\in Eq(\mathcal U)$  then
$(j_{2}(\varepsilon))^{-1}=j_{2}(\varepsilon), \
(j_{2}^{\ast}(\varepsilon))^{-1}= j_{2}^{\ast}(\varepsilon)$ and
$$\tilde{\varepsilon}=\bigcup_{w\in F(X)}w(j_{2}(\varepsilon)\cup
j_{2}^{\ast}(\varepsilon))w^{-1}=\bigcup_{w\in
F(X)}wj_{2}(\varepsilon)w^{-1}.$$ Indeed, this follows from the
equality $wts^{-1}w^{-1}=(ws)s^{-1}t(ws)^{-1}.$ Note also that the
subgroup $[\widetilde{\varepsilon}]$  generated by $\varepsilon$ is
normal in $F(X)$.
\end{remark}

\begin{thm}\cite[Theorem 4.13.2]{MS}\label{thm:nafin}
Let $(X,{\mathcal U})$ be non-archimedean and let %
$\mathcal{B}\subseteq Eq(\mathcal U)$ be a  base of ${\mathcal U}$.

Then: \begin{enumerate}
\item the  family (of normal subgroups)
$\{[\tilde{\varepsilon}]:\ \varepsilon\in  \mathcal{B}\}$ is a base
of $N_{e}(F^b_ {\sna}).$
\item
the topology of $F^b_ {\sna}$ is the weak topology generated by the
system of homomorphisms $\{\overline{f_{\varepsilon}}: F(X) \to
F(X/\varepsilon)\}_{\varepsilon \in \mathcal{B}}$ on discrete groups
$F(X/\varepsilon)$. \end{enumerate}
\end{thm}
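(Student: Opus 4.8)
The plan is to realize the topology concretely on the abstract free group $F(X)$ and then identify the result with $F^b_{\sna}$ via its universal property, so that both assertions drop out simultaneously. I would first record two purely algebraic observations. First, by Remark \ref{rem:sym} each $[\widetilde{\varepsilon}]$ is a normal subgroup of $F(X)$, and since $\mathcal B$ is a base of $\mathcal U$ and $\varepsilon\mapsto\widetilde{\varepsilon}$ is monotone, $\{[\widetilde{\varepsilon}]:\varepsilon\in\mathcal B\}$ is a filter base of subgroups: given $\varepsilon_1,\varepsilon_2\in\mathcal B$ pick $\varepsilon_3\in\mathcal B$ with $\varepsilon_3\subseteq\varepsilon_1\cap\varepsilon_2$, so that $\widetilde{\varepsilon_3}\subseteq\widetilde{\varepsilon_1}\cap\widetilde{\varepsilon_2}$ and hence $[\widetilde{\varepsilon_3}]\subseteq[\widetilde{\varepsilon_1}]\cap[\widetilde{\varepsilon_2}]$. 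Second, for each $\varepsilon\in\mathcal B$ one has $\ker\overline{f_\varepsilon}=[\widetilde{\varepsilon}]$, where $f_\varepsilon\colon X\to X/\varepsilon$ is the quotient map: the inclusion $\supseteq$ holds because $\overline{f_\varepsilon}$ kills $j_2(\varepsilon)$ (if $(x,y)\in\varepsilon$ then $f_\varepsilon(x)=f_\varepsilon(y)$) and $\ker\overline{f_\varepsilon}$ is normal, while for $\subseteq$ the rule $f_\varepsilon(x)\mapsto x[\widetilde{\varepsilon}]$ is well defined (because $x^{-1}y\in j_2(\varepsilon)\subseteq[\widetilde{\varepsilon}]$ whenever $(x,y)\in\varepsilon$) and the induced homomorphism $F(X/\varepsilon)\to F(X)/[\widetilde{\varepsilon}]$ inverts the homomorphism $F(X)/[\widetilde{\varepsilon}]\to F(X/\varepsilon)$ coming from $\overline{f_\varepsilon}$. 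In particular $F(X)/[\widetilde{\varepsilon}]\cong F(X/\varepsilon)$.

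Next I would equip $F(X)$ with the group topology $\tau$ having $\{[\widetilde{\varepsilon}]:\varepsilon\in\mathcal B\}$ as a neighborhood base at $e$; this is legitimate by the filter-base property, and since the $[\widetilde{\varepsilon}]$ are open normal subgroups, $(F(X),\tau)$ is a non-archimedean balanced topological group. The canonical map $i\colon X\to(F(X),\tau)$ is uniformly continuous, since $(x,y)\in\varepsilon$ forces $i(x)^{-1}i(y)=x^{-1}y\in j_2(\varepsilon)\subseteq[\widetilde{\varepsilon}]$. To verify the universal property, let $\varphi\colon(X,\mathcal U)\to G$ be uniformly continuous with $G$ balanced non-archimedean, and let $\Phi\colon F(X)\to G$ be the unique group homomorphism extending $\varphi$ (its uniqueness among continuous homomorphisms is automatic, as $i(X)=X$ generates $F(X)$). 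Recall that a balanced non-archimedean group has a base at $e$ of open normal subgroups: for an open subgroup $U$, balancedness provides an invariant neighborhood $V\subseteq U$, and the normal subgroup generated by $V$ is open and still contained in $U$. Fix such an open normal subgroup $N$ of $G$; uniform continuity of $\varphi$ gives $\varepsilon\in\mathcal B$ with $\varphi(x)^{-1}\varphi(y)\in N$ for all $(x,y)\in\varepsilon$, and normality of $N$ also yields $\varphi(x)\varphi(y)^{-1}\in N$. Since $N$ is normal, $\Phi$ sends every conjugate $w(j_2(\varepsilon)\cup j_2^{\ast}(\varepsilon))w^{-1}$ into $N$, hence $\Phi(\widetilde{\varepsilon})\subseteq N$ and so $\Phi([\widetilde{\varepsilon}])\subseteq N$; thus $\Phi^{-1}(N)\supseteq[\widetilde{\varepsilon}]$ is $\tau$-open, and $\Phi$ is continuous. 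By uniqueness of the free object this identifies $F^b_{\sna}$ with $(F(X),\tau)$, which is precisely assertion (1).

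Assertion (2) is then formal. Each $\overline{f_\varepsilon}\colon(F(X),\tau)\to F(X/\varepsilon)$ is continuous into the discrete group $F(X/\varepsilon)$, because its point-preimages are cosets of the $\tau$-open subgroup $\ker\overline{f_\varepsilon}=[\widetilde{\varepsilon}]$; hence the weak topology $\tau_w$ generated by $\{\overline{f_\varepsilon}:\varepsilon\in\mathcal B\}$ is contained in $\tau$. Conversely each $[\widetilde{\varepsilon}]=\overline{f_\varepsilon}^{-1}(\{e\})$ is $\tau_w$-open, so the neighborhood base of $\tau$ at $e$ consists of $\tau_w$-neighborhoods and $\tau\subseteq\tau_w$; therefore $\tau=\tau_w$, as claimed.

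The step I expect to be genuinely delicate is the verification of the universal property in the second paragraph, in particular the reduction to testing continuity of $\Phi$ only against open normal subgroups of $G$ together with the observation that the sets $\widetilde{\varepsilon}$ are designed exactly so that $\Phi$ maps them into any such subgroup; everything else is routine manipulation of free groups and filter bases.
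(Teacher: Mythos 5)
You should note first that the paper itself contains no proof of Theorem \ref{thm:nafin}: it is quoted from \cite{MS}, so the only comparison available is with the argument in that reference. Your proof is correct and self-contained, and it takes the natural direct route: (i) the family $\{[\widetilde{\varepsilon}]:\varepsilon\in\mathcal B\}$ is a filter base of normal subgroups, hence a base at $e$ of a balanced non-archimedean group topology $\tau$ on $F(X)$; (ii) the universal property of $(F(X),\tau)$ is verified by reducing continuity of $\Phi$ to open normal subgroups of the target $G$ (your SIN-plus-non-archimedean observation) and noting that $\Phi$ maps $\widetilde{\varepsilon}$ into any such subgroup; (iii) the computation $\ker\overline{f_\varepsilon}=[\widetilde{\varepsilon}]$, via the inverse homomorphism $F(X/\varepsilon)\to F(X)/[\widetilde{\varepsilon}]$, gives $F(X)/[\widetilde{\varepsilon}]\cong F(X/\varepsilon)$ and makes assertion (2) formal. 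This differs in emphasis from \cite{MS}, where the balanced description arises as the constant-$\psi$ specialization of a Pestov-type description (the sets $V_\psi$ of Definition \ref{def:desc}) designed to handle the non-balanced free non-archimedean group as well; by treating only the balanced case you can write the topology down immediately, since the $[\widetilde{\varepsilon}]$ are already normal subgroups, which is a genuine simplification. One point you should make explicit: identifying $F^b_{\sna}$ with $(F(X),\tau)$ by uniqueness of the universal object implicitly assumes Definition \ref{d:FreeGr} does not demand Hausdorffness. If separated groups are insisted upon, you need the extra (easy) observation that for a separated $\mathcal U$ one has $\bigcap_{\varepsilon\in\mathcal B}[\widetilde{\varepsilon}]=\{e\}$: for a reduced word $w\neq e$ pick $\varepsilon\in\mathcal B$ separating the finitely many pairs of distinct letters involved, so the image of $w$ in $F(X/\varepsilon)$ is still reduced and nontrivial, whence $w\notin[\widetilde{\varepsilon}]=\ker\overline{f_\varepsilon}$. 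With that remark added your argument is complete under either convention; as it stands it is a correct proof of the cited statement in the non-separated reading, which is how the theorem is phrased here.
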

It follows from Theorem \ref{thm:nafin} that $F^b_ {\sna}$ is
metrizable if the  uniform space $(X,{\mathcal U})$ is metrizable.
In fact, in this case  $F^b_ {\sna}$ is metrizable by a Graev type
ultra-metric as the following theorem suggests.
\begin{thm} \label{thm:metgr}Let $(X,d)$ be an ultra-metric space. \begin{enumerate} \item Fix $x_0\in X$ and extend
the definition of $d$ from $X$ to $X':=X\cup \{e\}$ by letting
$d(x,e)=\max\{d(x,x_0),1\}.$ Next, extend it to $\overline{X}:=X\cup
X^{-1}\cup\{e\}$ by defining for every $x,y\in X\cup \{e\}:$
\begin{enumerate}
\item $d(x^{-1},y^{-1})=d(x,y) $
\item $d(x^{-1},y)=d(x,y^{-1})=\max\{d(x,e),d(y,e)\}$
\end{enumerate}
Then for $\varepsilon<1$ we have
$B_{\delta_u}(e,\varepsilon)=[\widetilde{\mathcal{E}}]$ where
$\delta_u$ is the Graev ultra-metric associated with $d$ and
$$\mathcal{E}:=\{(x,y)\in X\times X:d(x,y)<\varepsilon\}.$$
\item $F^b_ {\sna}(X,d)$ is metrizable by the Graev ultra-metric
associated with $(X,d).$
\end{enumerate}
\end{thm}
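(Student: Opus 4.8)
The plan is to prove part~(1) first and then read off part~(2) from Theorem~\ref{thm:nafin}. As a preliminary step one checks that the $d$ just defined is genuinely an ultra-metric on $\overline{X}$ and satisfies conditions (1)--(3) of Definition~\ref{def:grau} together with the extra hypothesis $d(x^{-1},y)=d(x,y^{-1})=\max\{d(x,e),d(y,e)\}$ of Theorem~\ref{theorem:grau}(b); verifying the strong triangle inequality for triples that meet $X^{-1}\cup\{e\}$ reduces, via the defining formulas, to the ultra-metric inequality on $X$ and to $d(x,y)\le\max\{d(x,e),d(y,e)\}$, so it is routine. Hence, by Theorem~\ref{thm:gao}.2, $\delta_u$ is a two-sided invariant ultra-metric on $F(X)$ restricting to $d$ on $\overline{X}$. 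The decisive feature of this particular extension is that $d(x,e)\ge 1$ for every $x\in\overline{X}\setminus\{e\}$, so that $\varepsilon<1$ lies strictly below every ``distance to $e$''.

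For the inclusion $[\widetilde{\mathcal{E}}]\subseteq B_{\delta_u}(e,\varepsilon)$, first note that two-sided invariance and the strong triangle inequality make $B_{\delta_u}(e,\varepsilon)$ a normal subgroup of $F(X)$: if $\delta_u(u,e),\delta_u(v,e)<\varepsilon$ then $\delta_u(uv,e)\le\max\{\delta_u(uv,v),\delta_u(v,e)\}=\max\{\delta_u(u,e),\delta_u(v,e)\}<\varepsilon$, while invariance under inversion and under conjugation is immediate. Since $\delta_u$ extends $d$, for $(x,y)\in\mathcal{E}$ we have $\delta_u(x^{-1}y,e)=\delta_u(y,x)=d(x,y)<\varepsilon$ and likewise $\delta_u(xy^{-1},e)<\varepsilon$, so $j_2(\mathcal{E})\cup j_2^{\ast}(\mathcal{E})\subseteq B_{\delta_u}(e,\varepsilon)$; normality then gives $\widetilde{\mathcal{E}}\subseteq B_{\delta_u}(e,\varepsilon)$ and hence $[\widetilde{\mathcal{E}}]\subseteq B_{\delta_u}(e,\varepsilon)$.

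The reverse inclusion is the heart of the matter. Let $w\in F(X)$ with $\delta_u(w,e)<\varepsilon$; if $w=e$ there is nothing to prove, so write $w=x_0\cdots x_n$ with each $x_i\in X\cup X^{-1}$. By Theorem~\ref{thm:gao}.1 there is a match $\theta$ on $\{0,\dots,n\}$ with $\rho_u(w,w^{\theta})<\varepsilon$, i.e. $d(x_i,x_i^{\theta})<\varepsilon$ for all $i$. Since $d(x,e)\ge 1>\varepsilon$ on $\overline{X}\setminus\{e\}$, a fixed point $i=\theta(i)$ would give $d(x_i,x_i^{\theta})=d(x_i,e)\ge 1$; hence $\theta$ is fixed-point-free. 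For a matched pair $i<j=\theta(i)$ the constraint $d(x_j,x_i^{-1})<\varepsilon$ forces $x_j$ and $x_i^{-1}$ to lie on the same side $X$ or $X^{-1}$ (otherwise that distance equals some $\max\{d(\cdot,e),d(\cdot,e)\}\ge 1$), i.e. $x_i$ and $x_j$ have opposite signs; writing $\{x_i,x_j\}=\{a^{-1},b\}$ with $a,b\in X$ we then get $d(x_j,x_i^{-1})=d(a,b)<\varepsilon$, so $(a,b)\in\mathcal{E}$ and $x_ix_j\in j_2(\mathcal{E})\cup j_2^{\ast}(\mathcal{E})$. Now argue by induction on $lh(w)$. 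Put $k=\theta(0)$; the non-crossing condition of Definition~\ref{def:dag} forces $\theta$ to preserve the blocks $\{1,\dots,k-1\}$ and $\{k+1,\dots,n\}$ (exactly as in the proof of Claim~1 above), and the restrictions are, after reindexing, matches of these blocks whose $\rho_u$-value is still $<\varepsilon$; so for the irreducible sub-words $u=x_1\cdots x_{k-1}$ and $v=x_{k+1}\cdots x_n$ (an empty block denoting $e$) the induction hypothesis gives $u,v\in[\widetilde{\mathcal{E}}]$. Since $w=(x_0\,u\,x_0^{-1})(x_0x_k)v$ with $x_0ux_0^{-1}\in[\widetilde{\mathcal{E}}]$ by normality, $x_0x_k\in j_2(\mathcal{E})\cup j_2^{\ast}(\mathcal{E})\subseteq[\widetilde{\mathcal{E}}]$, and $v\in[\widetilde{\mathcal{E}}]$, we conclude $w\in[\widetilde{\mathcal{E}}]$. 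This proves (1).

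Part~(2) follows quickly. The sets $\mathcal{E}_r=\{(x,y)\in X\times X:\ d(x,y)<r\}$ with $0<r<1$ are equivalence relations (because $d$ is an ultra-metric) and form a base of the metric uniformity of $(X,d)$, so by Theorem~\ref{thm:nafin}.1 the subgroups $[\widetilde{\mathcal{E}_r}]$ form a base of $N_e(F^b_{\sna}(X,d))$. By part~(1), $[\widetilde{\mathcal{E}_r}]=B_{\delta_u}(e,r)$ for every such $r$, while the balls $B_{\delta_u}(e,r)$, $0<r<1$, form a base of neighborhoods of $e$ in the topological group $(F(X),\delta_u)$ (Theorem~\ref{thm:gao}.2). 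As $F^b_{\sna}(X,d)$ and $(F(X),\delta_u)$ are topological groups with the same underlying abstract group $F(X)$ and the same neighborhood filter at $e$, they coincide, so $F^b_{\sna}(X,d)$ is metrizable by $\delta_u$. The main obstacle is the reverse inclusion in (1): one must read off from $d(x,e)\ge 1$ that a $\rho_u$-small match is fixed-point-free and pairs only opposite-sign letters over $\mathcal{E}$-close base points, and then exploit the non-crossing (planar) structure of matches to peel off one matched pair at a time, the identity $w=(x_0ux_0^{-1})(x_0x_k)v$ together with the normality of $[\widetilde{\mathcal{E}}]$ carrying out the bookkeeping.
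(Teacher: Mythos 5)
Your argument is correct, and for the crucial inclusion $B_{\delta_u}(e,\varepsilon)\subseteq[\widetilde{\mathcal{E}}]$ it takes a genuinely different route from the paper. The paper goes back to the definition of $\delta_u$ as an infimum: from $\delta_u(w,e)<\varepsilon$ it extracts a representative pair $w^{\ast}=x_0\cdots x_n$, $v=y_0\cdots y_n$ with $(w^{\ast})'=w$, $v'=e$ and $d(x_i,y_i)<\varepsilon$, and then inducts on the length of this (possibly longer) representative, splitting according to whether $y_0=y_n^{-1}$ in the trivial companion word $v$; the final bookkeeping identity $(x_0x_n)\,x_n^{-1}(x_1\cdots x_{n-1})x_n=w$ plays the role of your $(x_0ux_0^{-1})(x_0x_k)v$. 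You instead invoke Gao's characterization (Theorem \ref{thm:gao}.1) to get a match $\theta$ on the reduced word itself with $\rho_u(w,w^{\theta})<\varepsilon$, use the specific feature $d(x,e)\geq 1>\varepsilon$ of this extension to show $\theta$ is fixed-point-free and pairs only opposite-sign letters over $\mathcal{E}$-close points of $X$, and then peel off the pair $\{0,\theta(0)\}$ after checking that the non-crossing condition of Definition \ref{def:dag} makes $\theta$ preserve the two complementary blocks (that block argument parallels the first case of Claim 1 in the proof of Theorem \ref{theorem:grau}, and your fixed-point-free/opposite-sign analysis is close in spirit to the paper's later Lemma \ref{lem:leq}). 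Your version buys a cleaner induction on $lh(w)$ with no words containing the letter $e$ and no case analysis on the trivial word, at the price of leaning on the nontrivial Theorem \ref{thm:gao}.1 and the block-preservation lemma, whereas the paper's proof is more self-contained, using only the definition of $\delta_u$. Your first inclusion and part (2) coincide with the paper's (part (2) there is likewise deduced from (1) together with Theorem \ref{thm:nafin}.1, the balls of radius $r<1$ being a base at $e$ for the $\delta_u$-topology).
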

\begin{proof}
$(1):$  We first show that $ [\widetilde{\mathcal{E}}]\subseteq
B_{\delta_u}(e,\varepsilon).$ Since the open ball
$B_{\delta_u}(e,\varepsilon)$ is a normal subgroup of $F(X)$ it
suffices to show by (Remark \ref{rem:sym}) that
$j_2(\mathcal{E})\subseteq B_{\delta_u}(e,\varepsilon).$ Assuming
that $d(x,y)<\varepsilon$ we have
$\delta_u(x^{-1}y,e)=\delta_u(x,y)=d(x,y)<\varepsilon.$ This implies
that $x^{-1}y\in B_{\delta_u}(e,\varepsilon)$  and therefore
$j_2(\mathcal{E})\subseteq B_{\delta_u}(e,\varepsilon).$

We now show that $B_{\delta_u}(e,\varepsilon)\subseteq
[\widetilde{\mathcal{E}}].$ Let $e\neq w\in
B_{\delta_u}(e,\varepsilon),$ then by the definition of $\delta_u$
there exist words $$w^{\ast}=x_0\cdots x_n, \ v=y_0\cdots y_n\in
W(X)$$ such that $w=(w^{\ast})', v'=e$ and $d(x_i,y_i)<\varepsilon \
\forall i.$ We prove using induction on $lh(w^{\ast})=lh(v),$ that
$w\in [\widetilde{\mathcal{E}}].$ For $lh(w^{\ast})=1$ the assertion
holds trivially. For $lh(w^{\ast})=2$ assume that
$d(x_0,y_0)<\varepsilon, d(x_1,y_1)<\varepsilon$ and $y_1=y_0^{-1}.$
Then $d(x_1,y_1)=d(x_1^{-1},y_0)$ and since $d(x_0,y_0)<\varepsilon$
we obtain, using the strong triangle inequality, that
$d(x_0^{-1},x_1)=d(x_0,x_1^{-1})<\varepsilon.$ Since $\varepsilon<1$
and $x_0\neq x_1^{-1}$ it follows that $(x_0^{-1},x_1)\in X\times X$
or $(x_0^{-1},x_1)\in X^{-1}\times X^{-1}.$ In the first case
$(x_0^{-1},x_1)\in \mathcal{E}$ and thus $w=x_0x_1\in
j_2(\mathcal{E})\subseteq [\widetilde{\mathcal{E}}].$ In the second
case $(x_0,x_1^{-1})\in \mathcal{E}$
  and
thus $w=x_0x_1\in j_{2}^{\ast}(\mathcal{E})\subseteq
[\widetilde{\mathcal{E}}].$ Now assume the assertion is true for
$k<lh(w^{\ast})$ and that $lh(w^{\ast})\geq 3.$

First case: $y_0\neq y_n^{-1}.$ There exists $n>m$ such that
$y_0\cdots y_m=y_{m+1}\cdots y_n=e.$ By the induction hypothesis
$$x_0\cdots x_m, x_{m+1}\cdots x_n \in [\widetilde{\mathcal{E}}].$$
Since $[\widetilde{\mathcal{E}}]$ is a subgroup we have
$w\in [\widetilde{\mathcal{E}}].$\\
Second case: $y_0=y_n^{-1}.$ In this case $y_1\cdots y_{n-1}=e$ and
by the induction hypothesis $x_1\cdots x_{n-1}\in
[\widetilde{\mathcal{E}}].$ Since $[\widetilde{\mathcal{E}}]$ is
normal, $x_n^{-1}x_1\cdots x_{n-1}x_n\in [\widetilde{\mathcal{E}}].$
Since $y_0y_n=e$ it follows  from the induction hypothesis (for
$lh(w^{\ast})=2$)  that $x_0x_n\in [\widetilde{\mathcal{E}}].$
Finally, since $[\widetilde{\mathcal{E}}]$ is a subgroup,
$(x_0x_n)x_n^{-1}x_1\cdots x_{n-1}x_n=w\in
[\widetilde{\mathcal{E}}]$. This completes the proof of $(1)$.
\\
$(2):$ Immediately follows from $(1)$ and Theorem \ref{thm:nafin}.1
\end{proof}
\section{Comparison between Graev type ultra-metrics}  \label{sub:zar}  In   \cite{SZ} Savchenko and Zarichnyi introduced an ultra-metrization
$\hat{d}$ of the free group  over an ultra-metric space $(X,d)$ with
$diam(X)\leq 1.$  They used this ultra-metrization to study a
functor on the category of ultra-metric spaces of diameter$\leq 1$
and nonexpanding maps.

 Let $(X,d)$ be an ultra-metric space with diameter$\leq 1$.
Extend $d$ to  an ultra-metric on  $\overline{X}$  by defining
$$d(x^{-1},y^{-1})=d(x,y), \ d(x^{-1},y)=d(x,y^{-1})=d(x,e)=d(x^{-1},e)=1$$ for every $x,y\in X.$ Consider
its  associated Graev ultra-metric $\delta_u.$
  Our aim is to show that   $\delta_u=\hat{d}$ (Theorem
  \ref{thm:coinc}).
We first  provide the  definition of $\hat{d}$ from
 \cite{SZ}.

 Let $\alpha: F(X)\to \mathbb Z$ be the
continuous homomorphism extending the constant map $X\to
\{1\}\subseteq \mathbb Z.$ For every $r>0$ let $\mathcal{F}_{r}$ be
the partition of $X$ formed by the open balls with radius $r$ and
$q_r:X\to X/{F}_{r}$ is the quotient map.  Let $F(q_r):F(X)\to
F(X/{F}_{r})$ be the extension of $q_r:X\to X/{F}_{r}\hookrightarrow
F(X/{F}_{r}).$
 \begin{defi} (\cite[page 726]{SZ}) The function $\hat{d}:F(X)\times F(X)\to \mathbb R$ is defined as follows:

$$\hat{d}(v,w)= \left\{
   \begin{array}{ll}
     1, & \text{if}  \ \alpha(v)\neq\alpha(w) \\
     \inf\{r>0| \ F(q_r)(v)=F(q_r)(w)\} , & \text{if} \ \alpha(v)=\alpha(w)
   \end{array}
 \right.$$ for $v,w\in F(X).$
\end{defi}

\begin{thm}\label{thm:saz}
\cite[Theorem 3.1]{SZ} The function $\hat{d}$ is an invariant
continuous ultra-metric on the topological group $F(X).$
\end{thm}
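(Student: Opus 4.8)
The plan is to verify the ultra-metric axioms, bi-invariance, and continuity of $\hat{d}$ directly from its definition, resting on three structural facts about the partitions $\mathcal{F}_r$ and the homomorphisms $\alpha$, $F(q_r)$. \textbf{(i)} \emph{Nesting:} in an ultra-metric space every open $r$-ball is a union of open $r'$-balls for $r'\le r$, so $\mathcal{F}_{r'}$ refines $\mathcal{F}_r$, the quotient $q_r$ factors through $q_{r'}$, and hence $F(q_r)$ factors through $F(q_{r'})$; consequently, for fixed $v,w$ with $\alpha(v)=\alpha(w)$ the set $\{r>0:F(q_r)(v)=F(q_r)(w)\}$ is upward closed, so every $r>\hat{d}(v,w)$ already satisfies $F(q_r)(v)=F(q_r)(w)$. \textbf{(ii)} \emph{Large radii:} since $\diam(X)\le 1$, for $r>1$ the quotient $X/\mathcal{F}_r$ is a single point, so $F(q_r)(v)$ depends only on $\alpha(v)$; hence $F(q_r)(v)=F(q_r)(w)$ whenever $\alpha(v)=\alpha(w)$, the defining infimum is never over the empty set, and $0\le\hat{d}\le 1$ everywhere. \textbf{(iii)} \emph{Homomorphisms:} $\alpha$ and every $F(q_r)$ are group homomorphisms, and $\alpha$ factors through each $F(q_r)$ via the sign-counting homomorphism of $F(X/\mathcal{F}_r)$.

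Granting these, symmetry of $\hat{d}$ and $\hat{d}(v,v)=0$ are immediate. For separation, let $v\ne w$: if $\alpha(v)\ne\alpha(w)$ then $\hat{d}(v,w)=1$; if $\alpha(v)=\alpha(w)$, I would write the reduced word $vw^{-1}=c_1\cdots c_m$ with $m\ge 1$ and note that a cancellation in $F(q_r)(c_1\cdots c_m)$ at position $j$ forces $c_j,c_{j+1}$ to carry opposite signs with their underlying points of $X$ in one open $r$-ball; as $c_{j+1}\ne c_j^{-1}$, those points are distinct, hence at positive distance. Letting $\eta>0$ be the least of these finitely many distances (with $\eta=1$ if no such adjacent pair occurs), no cancellation happens for $r\le\eta$, so $F(q_r)(vw^{-1})$ is reduced of length $m\ge 1$, whence $F(q_r)(v)\ne F(q_r)(w)$ and $\hat{d}(v,w)\ge\eta>0$. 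For the strong triangle inequality $\hat{d}(u,w)\le\max\{\hat{d}(u,v),\hat{d}(v,w)\}$: if $\alpha(u)\ne\alpha(w)$, or $\alpha(v)$ differs from the common value $\alpha(u)=\alpha(w)$, then a term on the right equals $1\ge\hat{d}(u,w)$; otherwise $\alpha(u)=\alpha(v)=\alpha(w)$, and for every $r$ exceeding $s:=\max\{\hat{d}(u,v),\hat{d}(v,w)\}$ fact (i) gives $F(q_r)(u)=F(q_r)(v)=F(q_r)(w)$, so $\hat{d}(u,w)\le r$; now let $r\downarrow s$. Bi-invariance follows from (iii): for any $u$, $\alpha(uv)=\alpha(uw)\iff\alpha(v)=\alpha(w)$ and $F(q_r)(uv)=F(q_r)(uw)\iff F(q_r)(v)=F(q_r)(w)$, so $\hat{d}(uv,uw)=\hat{d}(v,w)$, and likewise $\hat{d}(vu,wu)=\hat{d}(v,w)$; in particular $F(X)$ with the $\hat{d}$-topology is a topological group.

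For continuity one must fix the group topology on $F(X)$ that is intended — a natural choice, consistent with the rest of the paper, being the weak topology generated by the maps $F(q_r)$ into the discrete groups $F(X/\mathcal{F}_r)$ (cf.\ Theorem~\ref{thm:nafin}). For that topology each $F(q_r)$ is continuous by construction and $\alpha$ is continuous by (iii); since $\hat{d}(v,w)<r$ holds whenever $\alpha(v)=\alpha(w)$ and $F(q_{r'})(v)=F(q_{r'})(w)$ for some $0<r'<r$, each $\hat{d}$-ball is a union of preimages of points under these maps, so the weak topology refines the $\hat{d}$-metric topology; as a metric is continuous for its own topology, $\hat{d}$ is continuous. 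I expect the only genuine bookkeeping to be in the separation step — pinning down exactly when $F(q_r)$ introduces cancellations — and in applying the nesting property (i), with the correct non-strict endpoint, to obtain the strong triangle inequality; the remainder is formal manipulation with $\alpha$ and the $F(q_r)$.
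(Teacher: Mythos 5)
Your proposal is correct, but note that the paper itself contains no proof of Theorem \ref{thm:saz}: it is quoted verbatim from \cite[Theorem 3.1]{SZ} and used as a black box in Lemmas \ref{lem:geq} and \ref{lem:leq}. What you have written is therefore a self-contained reconstruction rather than an alternative to an argument in the text, and it checks out. The three structural facts are the right scaffolding: upward closedness of $\{r>0:\ F(q_r)(v)=F(q_r)(w)\}$ via refinement of the ball partitions, nonemptiness of that set (and the bound $\hat{d}\leq 1$) from $\diam(X)\leq 1$, and the factorization of $\alpha$ through each $F(q_r)$. Your separation step is the only place where real content is needed, and it is handled correctly: since the letters of the reduced word $vw^{-1}$ map to generators or inverse generators (never to $e$), the image word can only collapse through an adjacent pair of opposite sign whose underlying points lie in one open $r$-ball, and reducedness makes those points distinct, so taking $\eta$ below the finitely many relevant distances keeps $F(q_r)(vw^{-1})$ reduced and nontrivial for $r\leq\eta$. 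The strong triangle inequality via $r\downarrow\max\{\hat{d}(u,v),\hat{d}(v,w)\}$ and two-sided invariance from the homomorphism property are fine. The one point I would pin down rather than leave as a ``natural choice'' is the continuity clause: in \cite{SZ} the topological group $F(X)$ is the free topological group over the ultra-metric space, and since each $q_r:X\to F(X/\mathcal{F}_r)$ (discrete) is continuous because open balls are open, every $F(q_r)$, and hence $\alpha$, is continuous on that group; so the weak topology you use is coarser than the intended one and your continuity argument transfers verbatim. With that remark added, your write-up would serve as a complete proof of the cited theorem; it also dovetails with Theorem \ref{thm:nafin}.2, which identifies your weak topology with that of $F^b_{\sna}(X,d)$.
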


\begin{lemma}\label{lem:geq}
For every $v,w\in F(X)$ we have $\delta_u(v,w)\geq \hat{d}(v,w).$
\end{lemma}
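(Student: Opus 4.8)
The plan is to reduce the inequality $\delta_u(v,w) \geq \hat d(v,w)$ to the special case $w = e$. Both $\delta_u$ and $\hat d$ are invariant ultra-metrics on $F(X)$ (by Theorem \ref{thm:gao}.2 and Theorem \ref{thm:saz}), so it suffices to prove $\delta_u(v,e) \geq \hat d(v,e)$ for every $v \in F(X)$. If $\delta_u(v,e) \geq 1$ there is nothing to prove since $\hat d$ is bounded by $1$, so I may assume $\delta_u(v,e) = r < 1$. In particular $v$ must have even length and $\alpha(v) = 0$: indeed, by Theorem \ref{thm:gao}.1 there is a match $\theta$ with $\rho_u(v, v^\theta) = r < 1$, and since every letter $x_i$ of $v$ satisfies $d(x_i, e) = 1$ and $d(x_i, x_j^{-1}) = 1$ whenever $x_i, x_j$ lie in the same copy ($X$ or $X^{-1}$), the bound $\rho_u(v,v^\theta) < 1$ forces $\theta$ to have no fixed points and to pair each letter of $v$ with one from the opposite copy; hence $\alpha(v) = 0$ and $\hat d(v,e) = \inf\{r > 0 : F(q_r)(v) = e\}$.

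Next I would fix $r' $ with $r < r' < 1$ and show $F(q_{r'})(v) = e$, which gives $\hat d(v,e) \leq r' $; letting $r' \to r$ yields $\hat d(v,e) \leq r = \delta_u(v,e)$, as desired. To see $F(q_{r'})(v) = e$, take the match $\theta$ realizing $\rho_u(v, v^\theta) = r < r'$. For each pair $\{i, \theta(i)\}$ with $i < \theta(i)$, the corresponding letters are $x_i$ and $x_{\theta(i)}$, and $d(x_i, x_{\theta(i)}^\theta) = d(x_i, x_{\theta(i)})$... more precisely, unpacking Definition \ref{def:dag}, $\rho_u(v,v^\theta) < r'$ means that for every matched pair $i < \theta(i)$ we have $d(x_i, x_{\theta(i)}^{-1}) < r'$ (reading off the coordinate where $v^\theta$ has $x_i^{-1}$), so $x_i$ and $x_{\theta(i)}$ map to mutually inverse letters under $q_{r'}$ (using that $d$ on $\overline X$ restricted to $X$ is the original ultra-metric and the balls of radius $r'$ are the classes of $\mathcal F_{r'}$). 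I would then argue by induction on $lh(v)$ using the non-crossing (planarity) condition of a match: since $\theta$ is non-crossing, either $x_0$ is matched with $x_{n}$ (the last letter), or $v$ splits as a concatenation $v = g_1 g_2$ of two shorter words each carrying an induced match; in the first case $F(q_{r'})(x_0)F(q_{r'})(x_1\cdots x_{n-1})F(q_{r'})(x_n) = F(q_{r'})(x_0) \cdot e \cdot F(q_{r'})(x_0)^{-1} = e$ by the inductive hypothesis applied to the inner word, and in the second case $F(q_{r'})(v) = F(q_{r'})(g_1)F(q_{r'})(g_2) = e$ again by induction.

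The main obstacle I anticipate is bookkeeping the relationship between the match $\theta$ (which acts on positions of a word $v$ with $v' = v$, i.e.\ $v$ already reduced) and the homomorphism $F(q_{r'})$: one must check that when $x_i$ and $x_{\theta(i)}$ satisfy $d(x_i, x_{\theta(i)}^{-1}) < r'$, after applying $q_{r'}$ these two letters genuinely cancel in the free group $F(X/\mathcal F_{r'})$, including the subtle cases where a letter sits in $X^{-1}$ rather than $X$. This requires carefully invoking the defining symmetry relations of the extended ultra-metric $d$ on $\overline X$ (namely $d(x^{-1},y^{-1}) = d(x,y)$ and $d(x^{-1},y) = d(x,y^{-1})$) together with $\diam(X) \leq 1$, exactly as in the length-$2$ base case of Claim 1 in the proof of Theorem \ref{theorem:grau}. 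Once the cancellation at the level of $F(q_{r'})$ is established for matched pairs, the planar/non-crossing induction is routine and parallels the inductive decomposition already used twice in this paper.
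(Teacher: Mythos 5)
Your proposal is correct, but it takes a genuinely different route from the paper. The paper's proof is short: it invokes the maximality property established earlier (Theorem \ref{theorem:grau}(b), applicable here because the extension of $d$ to $\overline{X}$ in this section satisfies $d(x^{-1},y)=d(x,y^{-1})=1=\max\{d(x,e),d(y,e)\}$), so the only thing to check is that $\hat{d}$ is an invariant ultra-metric on $F(X)$ (Theorem \ref{thm:saz}) which \emph{extends} $d$ on $X\cup\{e\}$; that verification is a two-line computation with $\alpha$ and with $\inf\{r>0:q_r(x)=q_r(y)\}=d(x,y)$. You instead argue directly and combinatorially: reduce to $w=e$ by invariance, take the match $\theta$ attaining $\delta_u(v,e)=r<1$ via Theorem \ref{thm:gao}.1, note that $r<1$ forces $\theta$ to be fixed-point free and to pair letters from opposite copies (so $\alpha(v)=0$), check that for $r<r'<1$ each matched pair maps to mutually inverse letters under $q_{r'}$ (since the partition $\mathcal{F}_{r'}$ identifies points at distance $<r'$), and then run the standard non-crossing splitting induction to get $F(q_{r'})(v)=e$, hence $\hat{d}(v,e)\leq r'\to r$. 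This is essentially the converse companion of the paper's proof of Lemma \ref{lem:leq}, and it is sound; the splitting of a match into two induced matches that you sketch is exactly the argument in Claim 1 of Theorem \ref{theorem:grau}, and the trivial case $v=e$ and the boundedness $\hat{d}\leq 1$ (from $\operatorname{diam}(X)\leq 1$) are handled as you say. What you lose relative to the paper is economy and the illustration of the maximality theorem (one of the paper's stated purposes for Section \ref{sub:max}); what you gain is a self-contained argument that needs neither Theorem \ref{theorem:grau} nor anything about $\hat{d}$ beyond its definition and invariance, and which makes the match/quotient duality between the two inequalities explicit.
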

\begin{proof}
By Theorem \ref{thm:saz} and  Theorem \ref{theorem:grau}.b  it
suffices to prove that $\hat{d}$ extends the ultra-metric $d$
defined on $X\cup \{e\}$. For every $x\in X,  \ \alpha(x)=1\neq
0=\alpha(e).$ Thus
  for every $x\in X$ we have $\hat{d}(x,e)=d(x,e)=1.$  Let $x,y\in X.$
We have to show that $\hat{d}(x,y)=d(x,y).$

 Clearly $\alpha(x)=\alpha(y)=1.$ Therefore,
  $$\hat{d}(x,y)= \inf\{r>0| \ F(q_r)(x)=F(q_r)(y)\}= \inf\{r>0| \ q_r(x)=q_r(y)\}.$$

Denote $d(x,y)=s.$ It follows that $q_s(x)\neq q_s(y)$ and  for
every $r>s, \ q_r(x)= q_r(y).$ This implies that $$\inf\{r>0| \
q_r(x)=q_r(y)\}=s=d(x,y).$$

Hence  $\hat{d}(x,y)=d(x,y),$ which completes the proof.
\end{proof}

\begin{lemma}\cite[Lemma 3.5]{DG}\label{theorem:mat}
For any trivial word $w=x_0\cdots x_n$ there is a match $\theta$
such that for any $ i\leq n, \ x_{\theta(i)}=x_{i}^{-1}.$
\end{lemma}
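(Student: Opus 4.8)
The plan is to prove this by induction on $n+1 = lh(w)$, following the same pattern used in the inductive arguments throughout this paper (Claim 1, Theorem \ref{thm:metgr}). The base case $lh(w) = 2$ is immediate: a trivial word $w = x_0 x_1$ with $(x_0 x_1)' = e$ forces $x_1 = x_0^{-1}$, so the transposition $\theta = (0\ 1)$ works, and indeed $x_{\theta(0)} = x_1 = x_0^{-1}$ and $x_{\theta(1)} = x_0 = x_1^{-1}$. (If $lh(w) = 1$ then $w = e$ and the identity match trivially satisfies the condition vacuously, or one may simply start the induction at length $2$ since only even-length words can be trivial and nonempty in the relevant sense.)

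For the inductive step, suppose $lh(w) = n+1 \geq 3$ and the statement holds for all shorter trivial words. Since $w = x_0 \cdots x_n$ is trivial, the word-reduction algorithm must at some stage cancel a pair of adjacent inverse letters; the key observation is that there is some index $k$ with $1 \le k \le n$ such that $x_{k-1} x_k = e$, i.e.\ $x_k = x_{k-1}^{-1}$ — pick, for instance, an innermost cancellation. Then consider the word $u = x_0 \cdots x_{k-2} x_{k+1} \cdots x_n$ obtained by deleting positions $k-1$ and $k$; it still reduces to $e$, so it is trivial and has length $n - 1$. Actually, the cleaner split is the one used in Theorem \ref{thm:metgr}: either $x_0 = x_n^{-1}$, or there is $m < n$ with $x_0 \cdots x_m$ and $x_{m+1} \cdots x_n$ both trivial. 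In the first case, apply the induction hypothesis to the trivial word $x_1 \cdots x_{n-1}$ to get a match $\theta'$ on $\{1, \dots, n-1\}$, and extend it by $\theta(0) = n$, $\theta(n) = 0$; one checks this is still a match (the new pair $(0,n)$ nests around everything) and that $x_{\theta(0)} = x_n = x_0^{-1}$, $x_{\theta(n)} = x_0 = x_n^{-1}$, while the remaining equalities hold by $\theta'$. In the second case, apply the induction hypothesis separately to the two trivial words $x_0 \cdots x_m$ and $x_{m+1} \cdots x_n$, obtaining matches $\theta_1, \theta_2$; their disjoint union is a match on $\{0,\dots,n\}$ (no crossing can occur between the two blocks), and it satisfies the required letter-inversion property blockwise.

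The main obstacle is verifying that one of these two structural alternatives genuinely always holds for a trivial word of length $\geq 2$, and that in the first case the word $x_1 \cdots x_{n-1}$ is indeed trivial. For the latter: if $x_0 = x_n^{-1}$ then $w' = e$ forces $(x_1 \cdots x_{n-1})' = e$ as well, since the reduction of $w$ factors through cancelling the outer pair once the inside is reduced — this needs a short argument that reduction of $x_0 (x_1\cdots x_{n-1}) x_n$ with $x_n = x_0^{-1}$ yields $e$ iff $(x_1\cdots x_{n-1})' = e$. For the dichotomy itself: track the first letter $x_0$; in any reduction it must eventually be cancelled against some $x_j$ that has migrated adjacent to it, and a standard argument about reduced words shows either $j = n$ (first case) or the prefix $x_0\cdots x_j$ already reduces to $e$, giving the split with $m = j$. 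This is essentially the content already invoked implicitly in the proof of Theorem \ref{thm:metgr}, so I would either cite that reasoning or spell it out in one or two lines. Everything else is bookkeeping with the definition of a match in Definition \ref{def:dag}.
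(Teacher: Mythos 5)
The paper does not prove this lemma at all --- it is quoted verbatim from Ding--Gao \cite[Lemma 3.5]{DG} --- so there is no internal proof to compare against; your induction is, in spirit, the standard argument for such matching lemmas, and its skeleton (split off either the outer pair $(0,n)$ or two consecutive trivial blocks, apply the induction hypothesis, check that no crossing is created) is sound. The verification that the inner word $x_1\cdots x_{n-1}$ is trivial when $x_n=x_0^{-1}$ is immediate if you argue with group elements rather than with the reduction algorithm: $[w]=[x_0][x_1\cdots x_{n-1}][x_0]^{-1}=e$ forces $[x_1\cdots x_{n-1}]=e$.

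The one genuine oversight is your treatment of the letter $e$. Words in $W(X)$ are over the alphabet $\overline{X}=X\cup X^{-1}\cup\{e\}$, so trivial words need not have even length ($x\,e\,x^{-1}$ and $e$ itself are trivial), your aside ``only even-length words can be trivial'' is false, and the match produced by the lemma may genuinely need fixed points, namely $\theta(i)=i$ exactly where $x_i=e$ (note $e^{-1}=e$, so the required identity $x_{\theta(i)}=x_i^{-1}$ still holds there). Correspondingly, your justification of the dichotomy by ``tracking the first letter until it is cancelled against some $x_j$'' breaks down when $x_0=e$ (nothing cancels it; it is simply eliminated) --- that case must be handled separately, e.g.\ by splitting at $m=0$ --- and the tracing argument that the prefix $x_0\cdots x_j$ is itself trivial deserves more than the one or two lines you budget for it. All of this is fixable, and in fact the ``innermost cancellation'' route you mention and then abandon handles it most cleanly: a trivial word of length at least $2$ either contains a letter $e$ (make it a fixed point, delete it, recurse) or contains an adjacent pair $x_{k-1}x_k$ with $x_k=x_{k-1}^{-1}$ (pair $k-1$ with $k$, delete both, recurse); in either case deletion does not change the reduced word, and re-inserting a fixed point or an adjacent pair into a non-crossing involution cannot create a configuration $i<j<\theta(i)<\theta(j)$. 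That version needs no dichotomy about $x_0$ at all.
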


\begin{lemma}\label{lem:leq}
For every $v,w\in F(X)$ we have $\delta_u(v,w)\leq \hat{d}(v,w).$
\end{lemma}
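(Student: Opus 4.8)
The plan is to prove the inequality opposite to Lemma~\ref{lem:geq}, so that the two together give $\delta_u=\hat d$ (Theorem~\ref{thm:coinc}). Since $\delta_u$ and $\hat d$ are both invariant ultra-metrics on $F(X)$ (Theorem~\ref{thm:gao}.2 and Theorem~\ref{thm:saz}), it suffices to show $\delta_u(w,e)\le\hat d(w,e)$ for every $w\in F(X)$. The case $w=e$ is trivial, so fix an irreducible word $e\ne w=x_0\cdots x_n$ with $x_i\in X\cup X^{-1}$. If $\alpha(w)\ne\alpha(e)=0$, then $\hat d(w,e)=1$, while every value of $d$ on $\overline X$ is $\le 1$ (the diameter of $X$ is at most $1$ and every adjoined distance equals $1$), so $\rho_u\le 1$ on every pair of words and hence $\delta_u(w,e)\le 1=\hat d(w,e)$. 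Thus we may assume $\alpha(w)=0$, in which case $\hat d(w,e)=\inf\{r>0:\ F(q_r)(w)=e\}$, a nonempty set since any $r>1$ collapses $X$ to a point. By Theorem~\ref{thm:gao}.1 it is then enough to produce, for each such $r$, a single match $\theta$ on $\{0,\dots,n\}$ with $\rho_u(w,w^\theta)<r$: this gives $\delta_u(w,e)\le\rho_u(w,w^\theta)<r$, and taking the infimum over all admissible $r$ yields $\delta_u(w,e)\le\hat d(w,e)$.

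To build $\theta$ I would push $w$ through the quotient. Let $\bar x_i$ be $q_r(x_i)$ or $q_r(y)^{-1}$ according as $x_i\in X$ or $x_i=y^{-1}$, so that $F(q_r)(w)=(\bar x_0\cdots\bar x_n)'$. The hypothesis $F(q_r)(w)=e$ says precisely that the word $\bar w=\bar x_0\cdots\bar x_n\in W(X/\mathcal F_r)$ is trivial, so Lemma~\ref{theorem:mat} supplies a match $\theta$ on $\{0,\dots,n\}$ with $\bar x_{\theta(i)}=\bar x_i^{-1}$ for all $i$. This $\theta$ has no fixed point, because no letter $\bar x_i\in(X/\mathcal F_r)\cup(X/\mathcal F_r)^{-1}$ equals its own inverse (in particular $\bar w$ carries no letter $e$). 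I then use the very same $\theta$ as a match on $w$ and estimate $\rho_u(w,w^\theta)=\max_i d(x_i,x_i^\theta)$.

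For the estimate: if $\theta(i)>i$ then $x_i^\theta=x_i$ and $d(x_i,x_i^\theta)=0<r$. If $\theta(i)<i$ then $x_i^\theta=x_{\theta(i)}^{-1}$, and the relation $\bar x_{\theta(i)}=\bar x_i^{-1}$, after applying inversion, says that $x_i$ and $x_{\theta(i)}^{-1}$ have the same image under the map $\overline X\to\overline{X/\mathcal F_r}$ induced by $q_r$. Since $X/\mathcal F_r$ and $(X/\mathcal F_r)^{-1}$ are disjoint, $x_i$ and $x_{\theta(i)}^{-1}$ lie either both in $X$ or both in $X^{-1}$; in the first case they lie in the same $r$-ball of $X$, so $d(x_i,x_{\theta(i)}^{-1})<r$, and in the second case writing them as $a^{-1},b^{-1}$ we get $q_r(a)=q_r(b)$, hence $d(x_i,x_{\theta(i)}^{-1})=d(a^{-1},b^{-1})=d(a,b)<r$. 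So every term of the maximum is $<r$ and $\rho_u(w,w^\theta)<r$, which completes the argument. The step that needs the most care — and the likeliest place for a slip — is exactly this final case analysis: one must genuinely rule out fixed points of $\theta$ (which would force the useless value $d(x_i,e)=1$) and exploit that a matched pair always consists of one positive and one negative letter whose underlying points of $X$ are $r$-close, so that one really obtains a \emph{strict} $<r$ bound on each coordinate for every admissible $r$.
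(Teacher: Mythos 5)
Your proposal is correct and follows essentially the same route as the paper: reduce by invariance to $\delta_u(w,e)\le\hat d(w,e)$, dispose of the case $\alpha(w)\neq 0$ via $\delta_u\le 1$, and for each admissible $r$ push the letters through $F(q_r)$, invoke Lemma~\ref{theorem:mat} to get a fixed-point-free match $\theta$, and bound $\rho_u(w,w^\theta)<r$. Your final case analysis (matched letters lying in the same open $r$-ball, giving the strict bound) just spells out the step the paper states more tersely.
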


\begin{proof}
According to Theorems \ref{thm:gao}.2 and \ref{thm:saz}  both
 $\hat{d}$ and $\delta_u$ are invariant ultra-metrics. Therefore it
suffices to show that
$$\forall e\neq v\in F(X), \ \delta_u(v,e)\leq \hat{d}(v,e).$$

Let $v=x_0\cdots x_n\in F(X).$  Clearly $\delta_u(v,e)\leq 1.$ Thus
we may assume that $\alpha(v)=\alpha(e).$ Assume that $s>0$
satisfies $F(q_s)(v)=F(q_s)(e).$ We are going to show that there
exists a match $\theta$ such that $\rho(v,v^{\theta})<s.$ Using the
definition of $\hat{d}$ and Theorem \ref{thm:gao}.1 this will imply
that $\delta(v,e)\leq \hat{d}(v,e).$ For every $0\leq i \leq n$ let
$\overline{x_i}=F(q_s)(x_i).$ The equality $F(q_s)(v)=F(q_s)(e)$
suggests  that $\overline{x_0}\cdots \overline{x_n}\in W(X/{F}_{s})$
is a trivial word. By Lemma \ref{theorem:mat} there exists a match
$\theta$ such that for any $ i\leq n, \
\overline{x_{\theta(i)}}=\overline{x_{i}}^{-1}.$ Observe that
$\theta$ does not have fixed points. Indeed if $j$ is a fixed point
of $\theta$ then from the equalities
$\overline{x_{\theta(j)}}=\overline{x_{j}}^{-1}$ and
$\overline{x_{\theta(j)}}=\overline{x_j}$ we obtain that
$\overline{x_j}$ is the identity element of  $F(X/{F}_{s}).$ This
contradicts the fact that
 $x_j$ is not the identity element of
$F(X)$ and that $F(X/{F}_{s})$ is algebraically free over
$X/{F}_{s}$.

For every $0\leq i\leq n$ we conclude from the equality
$\overline{x_{\theta(i)}}=\overline{x_{i}}^{-1}$  that
$d(x_{i}^{-1},x_{\theta(i)})<s.$

Since $\theta$ does not have fixed points we obtain that
$$\rho_u(v,v^{\theta})=\max\{d(x_{i}^{-1},x_{\theta(i)}):\
\theta(i)<i\}<s.$$

This completes the proof.
\end{proof}

We finally obtain:
\begin{thm}\label{thm:coinc}
$\delta_u=\hat{d}$
\end{thm}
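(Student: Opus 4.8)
The plan is to combine the two inequalities already established in Lemmas~\ref{lem:geq} and~\ref{lem:leq}. Lemma~\ref{lem:geq} gives $\delta_u(v,w)\geq \hat{d}(v,w)$ for all $v,w\in F(X)$, and Lemma~\ref{lem:leq} gives the reverse inequality $\delta_u(v,w)\leq \hat{d}(v,w)$ for all $v,w\in F(X)$. Together these force $\delta_u(v,w)=\hat{d}(v,w)$ for every pair $v,w\in F(X)$, which is exactly the claimed equality of the two ultra-metrics.

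So the proof is essentially a one-line deduction: by Lemma~\ref{lem:geq} and Lemma~\ref{lem:leq} we have both $\delta_u\geq\hat{d}$ and $\delta_u\leq\hat{d}$ pointwise on $F(X)\times F(X)$, hence $\delta_u=\hat{d}$. The only thing worth checking is that the hypotheses under which the two lemmas were proved match the setting of the theorem --- namely, that $d$ on $\overline{X}$ is the extension with $d(x^{-1},y^{-1})=d(x,y)$ and $d(x^{-1},y)=d(x,y^{-1})=d(x,e)=d(x^{-1},e)=1$ of an ultra-metric on $X$ with $\diam(X)\leq 1$, and that this extension does satisfy the symmetry conditions (1)--(3) of Definition~\ref{def:grau} as well as the extra condition $d(x^{-1},y)=d(x,y^{-1})=\max\{d(x,e),d(y,e)\}$ needed in Theorem~\ref{theorem:grau}.b (which holds trivially here since both sides equal $1$). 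Both lemmas were stated precisely in this setting, so no further work is required.

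There is no real obstacle here: all the substance has been pushed into Lemma~\ref{lem:geq} (which uses the maximal property, Theorem~\ref{theorem:grau}.b, together with the fact from Theorem~\ref{thm:saz} that $\hat{d}$ is an invariant ultra-metric extending $d$ on $X\cup\{e\}$) and Lemma~\ref{lem:leq} (which uses the match characterization of $\delta_u(v,e)$ from Theorem~\ref{thm:gao}.1 and the Dudley--Gao-style matching lemma, Lemma~\ref{theorem:mat}, applied to the trivial word $\overline{x_0}\cdots\overline{x_n}$ in $F(X/\mathcal{F}_s)$). The proof of the theorem itself is therefore just the synthesis step.

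\begin{proof}
By Lemma~\ref{lem:geq}, $\delta_u(v,w)\geq \hat{d}(v,w)$ for all $v,w\in F(X)$, and by Lemma~\ref{lem:leq}, $\delta_u(v,w)\leq \hat{d}(v,w)$ for all $v,w\in F(X)$. Hence $\delta_u(v,w)=\hat{d}(v,w)$ for every $v,w\in F(X)$, i.e. $\delta_u=\hat{d}$.
\end{proof}
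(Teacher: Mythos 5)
Your proof is correct and is exactly the paper's argument: the theorem is obtained by combining the two inequalities of Lemma~\ref{lem:geq} and Lemma~\ref{lem:leq}. The additional remark that the hypotheses of both lemmas match the setting of the theorem is fine but not needed beyond what the lemmas already state.
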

\begin{proof}
Use Lemma \ref{lem:geq} and Lemma \ref{lem:leq}.
\end{proof}

\noindent \textbf{Acknowledgment:} I would like to thank   M.
Megrelishvili  and L. Polev for  their useful suggestions.

\bibliographystyle{plain}

\begin{thebibliography}{10}
\bibitem{AT} A. Arhangel'skii and M. Tkachenko,  \emph{Topological groups and related structures},
v. 1 of Atlantis Studies in Math. Series Editor: J. van Mill.
Atlantis Press, World Scientific, Amsterdam-Paris, 2008.
\bibitem{DG} L. Ding and S. Gao, \emph{Graev metric groups and Polishable subgroups},
Advances in Mathematics 213 (2007) 887-901.
\bibitem{GAO} S. Gao, \emph{Graev ultrametrics and surjectively universal non-Archimedean Polish groups},
Topol. Appl. \textbf{160} (2013), no. 6, 862-870.
\bibitem{GRA} M.I.  Graev, \emph{Theory of topological groups
I}, (in Russian), Uspekhi, Mat. Nauk {\bf 5} (1950), 2-56.

\bibitem{Mar} A.A. Markov, \emph{On free topological groups}, Izv. Akad. Nauk SSSR Ser. Mat. \textbf{9} (1945) 3-64.
\bibitem{Me-F} M. Megrelishvili, \emph{Free topological G-groups},
 New Zealand Journal of Mathematics, vol. \textbf{25} (1996), no. 1,
59-72.
 \bibitem{MS} M. Megrelishvili and M. Shlossberg,
\emph{Free non-archimedean topological groups}, Comment. Math. Univ.
Carolin. \textbf{54.2} (2013), 273-312.

\bibitem{Mor} S.A. Morris, \emph{Varieties of topological
groups},  Bull. Austral. Math. Soc. \textbf{1} (1969), 145-160.

\bibitem{Num2} E.C. Nummela, \emph{Uniform free topological groups and Samuel
compactifications}, Topology Appl. \textbf{13} (1982), no. 1, 77-83.
\bibitem{pes85} V. G. Pestov, \emph{Neighborhoods of identity in free topological groups}, Vestn. Mosk. Univ. Ser. 1.
Mat., Mekh., No. 3 (1985), 8--10 .
\bibitem{Pest-Cat} V. G. Pestov, \emph{Universal arrows to forgetful functors from categories
of topological algebras}, Bull. Austral. Math. Soc., \textbf{48}
(1993), 209-249.

\bibitem{SZ} A. Savchenko and M. Zarichnyi,
\emph{Metrization of free groups on ultrametric spaces}, Topol.
Appl. \textbf{157} (2010), 724-729.
\bibitem{Sip} O.V. Sipacheva,
\emph{The topology of a free topological group}, J. Math. Sci. (N.
Y.) \textbf{131} (2005), no. 4, 5765-5838.
\bibitem{Tk} M.G. Tkachenko,
\emph{On topologies of free groups}, Czech. Math. J., \textbf{34}
(1984), 541-551.
\bibitem{Us-free} V.V. Uspenskij, \emph{Free topological groups of metrizable spaces},
Math. USSR Izvestiya, \textbf{37} (1991), 657-680.
\end{thebibliography}

\end{document}